
\documentclass[10pt]{amsart}
\usepackage{hyperref}

\theoremstyle{plain}
\newtheorem{theorem}{Theorem}
\newtheorem{claim}[theorem]{Claim}
\newtheorem{lemma}[theorem]{Lemma}
\newtheorem{conjecture}[theorem]{Conjecture}
\newtheorem{proposition}[theorem]{Proposition}

\theoremstyle{definition}

\newcommand{\epsi}{\varepsilon}
\newcommand{\fhi}{\varphi}
\newcommand{\norm}[1]{\lVert#1\rVert}
\newcommand{\card}[1]{\lvert#1\rvert}
\newcommand{\conv}[1]{\operatorname{conv}(#1)}
\newcommand{\vol}[1]{\operatorname{vol}(#1)}
\newcommand{\numbersystem}[1]{\mathbf{#1}}
\newcommand{\R}{\numbersystem{R}}
\newcommand{\N}{\numbersystem{N}}

\begin{document}

\bibliographystyle{amsplain}

\title{Large convexly independent subsets of Minkowski sums}
\author{Konrad J.\ Swanepoel}
\thanks{Swanepoel gratefully acknowledges the hospitality of the Department of Applied Mathematics, Charles University, Prague.}
\address{Department of Mathematics,
	London School of Economics and Political Science,
	WC2A 2AE London, UK}
\author{Pavel Valtr}
\thanks{}
\address{Department of Applied Mathematics and Institute for Theoretical Computer Science\\ Charles University \\ Malostransk\'e n\'am.~25. 118 00 Praha 1 \\ Czech Republic}
\subjclass[2010]{Primary 52C10. Secondary 52A10}
\keywords{unit distances, diameter pairs, convex position, Erd\H{o}s-Stone theorem, combinatorial geometry}

\begin{abstract}
Let $E_d(n)$ be the maximum number of pairs that can be selected from a set of $n$ points in $\R^d$ such that the midpoints of these pairs are convexly independent.
We show that $E_2(n)\geq \Omega(n\sqrt{\log n})$, which answers a question of Eisenbrand, Pach, Rothvo\ss, and Sopher (2008) on large convexly independent subsets in Minkowski sums of finite planar sets, as well as a question of Halman, Onn, and Rothblum (2007).
We also show that $\lfloor\frac{1}{3}n^2\rfloor\leq E_3(n)\leq \frac{3}{8}n^2+O(n^{3/2})$. 

Let $W_d(n)$ be the maximum number of pairwise nonparallel unit distance pairs in a set of $n$ points in some $d$-dimensional strictly convex normed space.
We show that $W_2(n)=\Theta(E_2(n))$ and for $d\geq 3$ that $W_d(n)\sim\frac12\left(1-\frac{1}{a(d)}\right)n^2$, where $a(d)\in\N$ is related to strictly antipodal families.
In fact we show that the same asymptotics hold without the requirement that the unit distance pairs form pairwise nonparallel segments, and also if diameter pairs are considered instead of unit distance pairs.
\end{abstract}

\maketitle

\section{Three related quantities}
A geometric graph is a graph with the set of vertices in $\R^d$ and with each edge represented as a straight line segment between its incident vertices.
Halman et al.\ \cite{Halman} studied geometric graphs for which the set of midpoints of the edges are \emph{convexly independent}, i.e., they form the vertex set of their convex hull.
For any finite set $P\subset\R^d$ let $E(P)$ be the maximum number of pairs of points from $P$ such that the midpoints of these pairs are convexly independent, and define $E_d(n)=\max_{P\subset\R^d,\card{P}=n} E(P)$.
Halman et al.\ \cite{Halman} asked whether $E_2(n)$ is linear or quadratic.

Motivated by the above question, Eisenbrand et al.\ \cite{Eisenbrand} studied a more general quantity: the maximum size $M_d(m,n)$ of a convexly independent subset of $P+Q$, where $P$ is a set of $m$ points and $Q$ a set of $n$ points in $\R^d$, with the maximum again taken over all such $P$ and $Q$.
(The sets $P$ and $Q$ are not required to be disjoint, but may clearly without loss of generality be assumed to be.)
They showed that $M_2(m,n)=O(m^{2/3}n^{2/3}+m+n)$, from which follows $E_2(n)\leq M_2(n,n)=O(n^{4/3})$, since the midpoints of pairs of points in $P$ are contained in $\frac{1}{2}(P+P)$.
In fact, it holds more generally that $E_d(n)\leq M_d(n,n)$.
They mentioned that they do not know any superlinear lower bound for $M_2(m,n)$.

We now introduce $W_d(n)$ as the maximum number of pairwise nonparallel segments of unit length among a set of $n$ points in some strictly convex $d$-dimensional normed space.
Here the maximum is taken over all sets of $n$ points in $\R^d$ and all strictly convex norms on $\R^d$.
Then it is immediate that $2W_d(n)\leq M_d(n,n)$, since if $P$ has $W$ pairwise nonparallel unit distance pairs in some strictly convex norm with unit sphere $S$, then $P+(-P)$ intersects $S$ in at least $2W$ points.

\section{Asymptotic equivalence}
We now observe that the three quantities $E_d(n)$, $M_d(n,n)$ and $W_d(n)$ are in fact asymptotically equivalent.
Here we consider two functions $f,g:\N\to\N$ to be \emph{asymptotically equivalent} if there exist $c_1,c_2>0$ such that $c_1 f(n)\leq g(n)\leq c_2 f(n)$ for all $n\geq 2$.
We have already mentioned the bounds $E_d(n)\leq M_d(n,n)$ and $2W_d(n)\leq M_d(n,n)$.
\begin{claim}
 \[M_d(n,n)\leq E_d(2n). \]
\end{claim}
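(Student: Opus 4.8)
The plan is to exhibit, for given $n$-point sets $P,Q$ realising $M_d(n,n)$, a set $R$ of $2n$ points whose midpoint configuration is a homothetic copy of a large convexly independent subset of $P+Q$. So let $P,Q\subset\R^d$ with $\card{P}=\card{Q}=n$ be chosen so that $P+Q$ contains a convexly independent set $S$ with $\card{S}=M_d(n,n)$. Replacing $Q$ by a generic translate $Q+v$ only translates $P+Q$ and changes nothing relevant, so we may assume $P\cap Q=\emptyset$. Put $R=P\cup Q$; then $\card{R}=2n$.

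For each $s\in S$ fix one representation $s=p_s+q_s$ with $p_s\in P$ and $q_s\in Q$, and consider the family $\mathcal{F}=\{\{p_s,q_s\}:s\in S\}$ of pairs of points of $R$. Because $P\cap Q=\emptyset$, each $\{p_s,q_s\}$ is a genuine two-element pair; moreover if $\{p_s,q_s\}=\{p_{s'},q_{s'}\}$ then (again using $P\cap Q=\emptyset$) we must have $p_s=p_{s'}$ and $q_s=q_{s'}$, hence $p_s+q_s=p_{s'}+q_{s'}$, i.e.\ $s=s'$. Thus $s\mapsto\{p_s,q_s\}$ is injective and $\card{\mathcal{F}}=\card{S}=M_d(n,n)$. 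The midpoint of $\{p_s,q_s\}$ is $\tfrac12(p_s+q_s)=\tfrac12 s$, so the set of midpoints of the pairs in $\mathcal{F}$ is exactly $\tfrac12 S:=\{\tfrac12 s:s\in S\}$.

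Finally, $\tfrac12 S$ is convexly independent: the homothety $x\mapsto\tfrac12 x$ is an affine bijection of $\R^d$, and affine bijections carry the vertex set of a polytope onto the vertex set of its image, so the points of $\tfrac12 S$ are precisely the vertices of $\conv{\tfrac12 S}$. Hence $E(R)\geq\card{\mathcal{F}}=M_d(n,n)$, and since $\card{R}=2n$ this gives $M_d(n,n)\leq E(R)\leq E_d(2n)$. There is no real obstacle in this argument; the only points that need a moment's care are the reduction to $P\cap Q=\emptyset$ (so that the pairs $\{p_s,q_s\}$ are nondegenerate and in bijection with $S$) and the elementary fact that convex independence is preserved under the scaling $x\mapsto x/2$.
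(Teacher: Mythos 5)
Your proposal is correct and follows essentially the same route as the paper: pass to disjoint $P$ and $Q$, take $P\cup Q$ as the $2n$-point set, and observe that the midpoints of pairs between $P$ and $Q$ form $\frac12(P+Q)\supseteq\frac12 S$, which is convexly independent. You merely spell out the injectivity of $s\mapsto\{p_s,q_s\}$ and the invariance of convex independence under the homothety $x\mapsto\frac12 x$, which the paper leaves implicit.
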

\begin{proof}
Let $P$ and $Q$ each be a set of $n$ points such that $P+Q$ contains $M_d(n,n)$ convexly independent points.
Without loss of generality, $P$ and $Q$ are disjoint.
Then $P\cup Q$ is a set of $2n$ points such that the set of midpoints of pairs between $P$ and $Q$ equals $\frac{1}{2}(P+Q)$.
\end{proof}
\begin{claim}
\[ M_d(n,n)\leq 2W_d(2n). \]
\end{claim}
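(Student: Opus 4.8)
The plan is to realise the convexly independent set coming from the definition of $M_d(n,n)$ as a family of unit difference vectors. Fix disjoint $P,Q$ with $\card{P}=\card{Q}=n$ and a convexly independent $C=\{s_1,\dots,s_M\}\subseteq P+Q$ with $M=M_d(n,n)$, and write $s_i=p_i+q_i$ with $p_i\in P$, $q_i\in Q$. Take as the $2n$‑point set $R:=P\cup(-Q)$. The segment from $p_i$ to $-q_i$ has difference vector $p_i-(-q_i)=s_i$; since the $s_i$ are pairwise distinct these are $M$ distinct segments of $R$, and two of them are parallel in a strictly convex norm of which both are unit segments if and only if $s_i=-s_j$. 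As $x\leftrightarrow -x$ partitions $\{s_1,\dots,s_M\}$ into at least $M/2$ classes, it suffices to produce a strictly convex norm $\norm{\cdot}$ on $\R^d$ with $\norm{s_i}=1$ for all $i$: selecting one segment per class then gives $\lceil M/2\rceil$ pairwise nonparallel unit segments, so $W_d(2n)\ge M/2$.

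Replacing $P$ by $P+t$ replaces $C$ by $C+t$ and changes nothing else, so it is enough to find a translate $C+t$ lying on the boundary of some centrally symmetric strictly convex body $K\subseteq\R^d$, and to take $\norm{\cdot}$ to be its Minkowski functional (if $\conv{C}$ is not full‑dimensional, construct $K$ in its affine hull and add an ellipsoidal term on the orthogonal complement). For this it is in turn enough to choose $t$ so that $V:=(C+t)\cup(-(C+t))$ is in convex position: each $v\in V$, being a vertex of the polytope $\conv{V}$, lies on a sufficiently large ball $B_v$ with $\conv{V}\subseteq B_v$ and $v\in\partial B_v$, and then $K:=\bigcap_{v\in V}\bigl(B_v\cap(-B_v)\bigr)$ is centrally symmetric, strictly convex, contains $\conv{V}$, and has every $v\in V$ — in particular every $s_i+t$ — on its boundary.

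The remaining point, which I expect to be the main obstacle, is to show that $C$ admits a translate $C+t$ with $(C+t)\cup(-(C+t))$ in convex position. Writing $T_i=\operatorname{cone}(\conv{C}-s_i)$ for the tangent cone of $\conv{C}$ at its vertex $s_i$, one checks that for $t=Rw$ with $R$ large and $w$ generic, $s_i+t$ (and hence also $-(s_i+t)$) is a vertex of $\conv{V}$ precisely when $w\notin T_i$: for such $w$ one may pick an outer normal $u$ of $\conv{C}$ at $s_i$ with $\langle u,w\rangle>0$, and then the hyperplane $\{x:\langle u,x\rangle=\langle u,s_i+t\rangle\}$ strictly separates $s_i+t$ from $V\setminus\{s_i+t\}$ once $R$ is large. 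Hence if the tangent cones do not cover $\R^d$ we may translate far in any uncovered direction. If $\bigcup_i T_i=\R^d$ — which forces $\conv{C}$ to be ``spread out'' in all directions — one takes instead $t=-\text{centroid}(C)$ and verifies that no vertex of $\conv{C}$ is swallowed by the reflected copy; in the extreme case $C$ is itself centrally symmetric about its centroid, so $(C+t)\cup(-(C+t))=C+t$ is trivially in convex position (this case is also what forces the constant $2$, since then each $s_i$ is paired with its antipode and only $M/2$ of the unit segments are mutually nonparallel). The delicate step is the verification in the case $\bigcup_i T_i=\R^d$, where one must exclude the ``teardrop‑like'' polytopes for which the centroid reflection would fail.
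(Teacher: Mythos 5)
Your overall strategy is the right one and matches the paper's: realise each $s_i=p_i+q_i$ as the difference vector of a segment in $P\cup(-Q)$ and build a strictly convex norm in which these segments have unit length, noting that strict convexity forces distinct unit vectors to be nonparallel unless they are antipodes. The construction of the norm from a centrally symmetric convexly independent set $V$ (intersecting large balls $B_v\cap(-B_v)$) is also fine. The gap is exactly where you locate it, and it is not a repairable ``delicate step'': the statement you need --- that every convexly independent set $C$ admits a translate with $(C+t)\cup(-(C+t))$ convexly independent --- is \emph{false} in general. Take $C$ to be a sufficiently dense convexly independent set on the boundary of a strictly convex body $T$ that is not centrally symmetric (a Reuleaux triangle, say). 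For any candidate centre, either the reflected body $-T$ (suitably translated) has interior meeting $\partial T$, in which case points of $C$ lie in the interior of the reflected hull and are swallowed; or the two bodies have disjoint interiors, in which case the arc of $\partial T$ between the two common outer tangent lines, facing the reflected copy, lies in the interior of the joint convex hull and again swallows points of $C$. (The only escape, $-T\subseteq T$ up to translation, forces $T$ to be centrally symmetric since the two bodies have equal volume.) So your far-translation case analysis via tangent cones cannot be completed: when the tangent cones cover $\R^d$, no choice of $t$ works, not merely the centroid.

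The paper sidesteps this by demanding only \emph{half} of $C$ on the symmetric surface, and this is where its factor $2$ comes from (not from antipodal pairs, as in your accounting). Concretely: fix a generic direction; every point of $C$, being a vertex of $\conv{C}$, has a supporting hyperplane whose outer normal has positive or negative last coordinate, so at least $M/2$ points lie on the ``upper'' boundary of $\conv{C}$. Call this set $C^+$. Then $C^+\cup(2c-C^+)$ \emph{is} convexly independent once $c$ is taken far enough below (an upper cap together with a distant point-reflected lower cap), your ball-intersection construction applies to it, and no two points of $C^+$ are antipodal on the resulting sphere, so all $\lceil M/2\rceil$ segments are pairwise nonparallel. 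With that substitution your argument closes; as written, it does not.
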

\begin{proof}
Again let $P$ and $Q$ be disjoint sets of $n$ points each such that $P+Q$ contains a convexly independent subset $S$ of size at least $M_d(n,n)$.
There exists a strictly convex hypersurface $C$ symmetric with respect to the origin such that some translate of it contains at least $M_d(n,n)/2$ points from $S$.
Then $P\cup Q$ has at least $M_d(n,n)/2$ pairwise nonparallel unit distances in the norm which has $C$ as unit sphere.
\end{proof}
\begin{claim}
\[M_d(2n,2n)\leq 4M_d(n,n). \]
\end{claim}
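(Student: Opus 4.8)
The plan is to prove this by a partition-and-pigeonhole argument, mirroring the style of the two preceding claims. Fix sets $P$ and $Q$, each of $2n$ points, such that $P+Q$ contains a convexly independent set $S$ with $\card{S}=M_d(2n,2n)$. Split $P$ arbitrarily into two halves $P=P_1\cup P_2$ with $\card{P_1}=\card{P_2}=n$, and similarly $Q=Q_1\cup Q_2$ with $\card{Q_1}=\card{Q_2}=n$.

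The key observation is the set identity $P+Q=\bigcup_{i,j\in\{1,2\}}(P_i+Q_j)$, so every point of $S$ lies in at least one of the four sumsets $P_i+Q_j$. By the pigeonhole principle, some sumset $P_i+Q_j$ contains at least $\card{S}/4$ points of $S$. Now invoke the elementary fact that a subset of a convexly independent set is again convexly independent (a point that is not a convex combination of the others cannot become one after deleting some of those others). Hence $P_i+Q_j$, which is a sum of a set of $n$ points with a set of $n$ points, contains at least $\card{S}/4=M_d(2n,2n)/4$ convexly independent points, so $M_d(n,n)\geq M_d(2n,2n)/4$, which rearranges to the claimed inequality.

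There is essentially no genuine obstacle here: the only points requiring care are that the two halves of $P$ and of $Q$ have exactly size $n$ (immediate, since $2n$ is even), and that convex independence passes to subsets (immediate from the definition). If one wanted the cleaner bound without a ceiling, note that $M_d(n,n)$ is at least the integer $\lceil M_d(2n,2n)/4\rceil\geq M_d(2n,2n)/4$, which suffices. The same argument in fact gives $M_d(km,kn)\leq k^2 M_d(m,n)$ for every positive integer $k$, but only the case $k=2$, $m=n$ is needed in what follows.
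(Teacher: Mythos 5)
Your proof is correct and follows essentially the same partition-and-pigeonhole argument as the paper, which assigns each point of the convexly independent set a label $(i,j)$ according to a representation $p+q$ with $p\in P_i$, $q\in Q_j$ and pigeonholes over the four labels; your phrasing via the covering $P+Q=\bigcup_{i,j}(P_i+Q_j)$ is an immaterial variation. The only detail worth noting is that a point of $S$ may lie in several of the four sumsets, but pigeonhole over a covering (rather than a partition) still yields one sumset containing at least $\card{S}/4$ points, so nothing is lost.
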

\begin{proof}
Let $P$ and $Q$ be two sets of $2n$ points each such that $P+Q$ contains a set $C$ consisting of $M_d(2n,2n)$ convexly independent points.
Let $P=P_1\cup P_2$ and $Q=Q_1\cup Q_2$ be arbitrary partitions such that $\card{P_1}=\card{P_2}=\card{Q_1}=\card{Q_2}=n$.
Label each $p+q\in C$ by $(i,j)$ if $p\in P_i$ and $q\in Q_j$.
Each point in $C$ gets one of the four labels $(1,1)$, $(1,2)$, $(2,1)$, $(2,2)$.
By the pigeon-hole principle, at least $M_d(2n,2n)/4$ points in $C$ have the same label $(i,j)$, which means that they are contained in $P_i+Q_j$.
It follows that $M_d(2n,2n)/4\leq M_d(n,n)$.
\end{proof}
The above claims imply the following.
\begin{proposition}\label{prop}
For any fixed dimension $d$, $M_d(n,n)$, $E_d(n)$, and $W_d(n)$ are asymptotically equivalent.
\end{proposition}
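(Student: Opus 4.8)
The plan is to deduce the proposition entirely by bookkeeping, chaining the inequalities $E_d(n)\le M_d(n,n)$ and $2W_d(n)\le M_d(n,n)$ noted above together with the three claims just proved. Two soft observations make this work. The first is monotonicity: each of $m\mapsto M_d(m,n)$, $n\mapsto M_d(m,n)$, $E_d$, and $W_d$ is non-decreasing, because adjoining a point to a point set, or enlarging $P$ or $Q$, only enlarges the relevant sumset or point set while preserving any convexly independent subset (respectively, for $W_d$, any family of pairwise nonparallel segments of unit length in a fixed strictly convex norm). The second is that $f(n):=M_d(n,n)$ varies slowly: monotonicity gives $f(n)\le f(2n)$, the doubling claim $M_d(2n,2n)\le 4M_d(n,n)$ gives $f(2n)\le 4f(n)$, and iterating the latter yields $f(m)\le 16\,f(k)$ whenever $k\le m\le 4k$. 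It is this last point that does the real work: it is precisely what converts the ``lossy'' claims --- which only produce $E_d(2n)$ and $W_d(2n)$ --- back into bounds on $E_d(n)$ and $W_d(n)$.

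Granting this, the chain for $E_d$ runs as follows. The bound $E_d(n)\le f(n)$ is already in hand. For the reverse, given $n\ge 2$ I would put $k=\lfloor n/2\rfloor$, so that $k\ge 1$ and $2k\le n\le 4k$; then monotonicity of $E_d$ gives $E_d(n)\ge E_d(2k)$, the claim $M_d(k,k)\le E_d(2k)$ gives $E_d(2k)\ge f(k)$, and slow variation of $f$ gives $f(k)\ge\frac1{16}f(n)$. Hence $\frac1{16}f(n)\le E_d(n)\le f(n)$ for all $n\ge 2$, which is the asymptotic equivalence of $E_d$ and $M_d(n,n)$.

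The argument for $W_d$ has exactly the same shape. One has $W_d(n)\le\frac12 f(n)\le f(n)$ from the inequality noted above; and with $k=\lfloor n/2\rfloor$ as before, monotonicity of $W_d$ gives $W_d(n)\ge W_d(2k)$, the claim $M_d(k,k)\le 2W_d(2k)$ gives $W_d(2k)\ge\frac12 f(k)$, and slow variation gives $\frac12 f(k)\ge\frac1{32}f(n)$, so $\frac1{32}f(n)\le W_d(n)\le f(n)$. Since asymptotic equivalence (as defined above) is an equivalence relation, $M_d(n,n)$, $E_d(n)$, and $W_d(n)$ are pairwise asymptotically equivalent, which is Proposition~\ref{prop}.

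I do not anticipate a genuine obstacle; the only things needing a little care are making sure the factor-of-$2$ losses incurred in the three claims get absorbed rather than compounded --- which is exactly what the doubling claim guarantees --- and covering odd values of $n$, which is why the passage to $k=\lfloor n/2\rfloor$ rather than $n/2$ is used above.
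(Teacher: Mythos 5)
Your proposal is correct and is exactly the argument the paper intends: the paper simply states that the three claims (together with the trivial inequalities $E_d(n)\le M_d(n,n)$ and $2W_d(n)\le M_d(n,n)$) imply the proposition, and your write-up supplies precisely the bookkeeping---monotonicity, the doubling claim to absorb the factor-of-two losses, and the passage to $k=\lfloor n/2\rfloor$---that the paper leaves to the reader. No gaps.
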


\section{The plane}
The fact that $M_2(n,n)=O(n^{4/3})$ \cite{Eisenbrand} gives Proposition~\ref{prop} nontrivial content in the case $d=2$.
To show that the quantities $E_2(n)$, $M_2(n,n)$, and $W_2(n)$ grow superlinearly, it is sufficient to consider the following smaller quantities.
Let $E_\circ(n)$ denote the largest number of pairs of a set of $n$ points in the Euclidean plane such that the midpoints of these pairs are concyclic (i.e., they lie on the same Euclidean circle).
Let $W_\circ(n)$ denote the largest number of pairwise nonparallel unit distance pairs in a set of $n$ points in the Euclidean plane.
Then clearly $E_2(n)\geq E_\circ(n)$ and $W_2(n)\geq W_\circ(n)$.
As observed in the book of Bra\ss, Moser, and Pach \cite{BMP}, a planar version of an argument of Erd\H{o}s, Hickerson, and Pach \cite{EHP} already gives a superlinear lower bound $W_\circ(n)=\Omega(n\log^\ast n)$.
Here $\log^\ast n$ denotes the iterated logarithm.
In an earlier paper \cite{SV} we showed $W_\circ(n)=\Omega(n\sqrt{\log n})$.
This gives the following.
\begin{theorem}
$E_2(n)$, $M_2(n,n)$, and $W_2(n)$ are all in $\Omega(n\sqrt{\log n})$.
\end{theorem}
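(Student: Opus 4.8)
The plan is to derive the bound from the machinery already in place, namely Proposition~\ref{prop} together with a superlinear lower bound on the auxiliary quantity $W_\circ(n)$. Since the Euclidean norm is strictly convex, every configuration counted by $W_\circ(n)$ is also counted by $W_2(n)$, so $W_2(n)\geq W_\circ(n)$; hence it suffices to establish $W_\circ(n)=\Omega(n\sqrt{\log n})$, and then Proposition~\ref{prop} transfers this lower bound, up to absolute constant factors, to $E_2(n)$ and to $M_2(n,n)$. So the entire theorem reduces to the estimate $W_\circ(n)=\Omega(n\sqrt{\log n})$.

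For that estimate I would invoke the construction from our earlier paper \cite{SV}. At a high level, the goal is to build a set of $n$ points in the Euclidean plane realizing $\Omega(n\sqrt{\log n})$ unit-distance pairs, no two of which determine parallel segments. The starting point is the planar version of the Erd\H{o}s--Hickerson--Pach argument recorded in \cite{BMP}, which already produces $\Omega(n\log^\ast n)$: one iterates a construction in which each stage is a union of suitably rotated and translated copies of the previous stage, with the rotation angles chosen so that the newly created unit segments use only directions not used before, and so that enough ``fresh'' directions survive to feed the next iteration. Replacing the $\log^\ast$-type recursion by a more careful quantitative accounting of how many direction classes are destroyed at each level — so that roughly $\sqrt{\log n}$ levels can be sustained — yields the improved bound $W_\circ(n)=\Omega(n\sqrt{\log n})$.

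The only genuinely hard part is this construction: one must guarantee that, across many levels of the recursion, the unit-distance pairs stay pairwise nonparallel, which forces a delicate choice of the rotation and translation parameters (for instance choosing angles so that the relevant trigonometric relations are algebraically independent, or perturbing generically) together with a precise bound on the number of forbidden directions introduced at each step. Granting the resulting inequality $W_\circ(n)=\Omega(n\sqrt{\log n})$ from \cite{SV}, the theorem follows at once by combining it with $W_2(n)\geq W_\circ(n)$ and Proposition~\ref{prop}.
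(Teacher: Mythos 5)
Your proposal is correct and follows essentially the same route as the paper: the theorem is obtained by citing the bound $W_\circ(n)=\Omega(n\sqrt{\log n})$ from \cite{SV}, observing $W_2(n)\geq W_\circ(n)$ since the Euclidean norm is strictly convex, and transferring the bound to $E_2(n)$ and $M_2(n,n)$ via the inequalities behind Proposition~\ref{prop}. The sketch of the construction in \cite{SV} is not needed here, as the paper simply cites that result.
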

Recently it was shown by 
Buchin, Fulek, Kiyomi, Okamoto, Tanigawa, and Cs.\ T\'oth \cite{BFKOTT}
and also by Ond\v{r}ej B\'{\i}lka (personal communication)
that $M_2(m,n)=\Theta(m^{2/3}n^{2/3}+m+n)$.
This implies that $E_2(n)$, $M_2(n,n)$, and $W_2(n)$ are all in $\Theta(n^{4/3})$.

\section{Higher dimensions}
When $d\geq 3$, Proposition~\ref{prop} has empty content, since then the functions $E_d(n)$, $M_d(n,n)$, and $W_d(n)$ are all in $\Theta(n^2)$, since, as shown by Halman et al.\ \cite{Halman}, $M_d(m,n)=mn$ for all $d\geq 3$.
They also showed that $E_d(n)=\binom{n}{2}$ for $d\geq 4$, which leaves only the $3$-dimensional case of this function.

\subsection{Convexly independent subsets of Minkowski sums in $3$-space}

\begin{theorem}\label{thmE}
$\lfloor\frac{1}{3}n^2\rfloor\leq E_3(n)\leq \frac{3}{8}n^2+O(n^{3/2})$.
\end{theorem}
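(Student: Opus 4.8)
The plan is to prove the two inequalities by quite different means: the lower bound by an explicit construction, the upper bound by a Tur\'an-type argument.

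\medskip
\noindent\emph{Lower bound.}
Write $n=n_1+n_2+n_3$ with the $n_i$ as equal as possible, and build $P=A_1\cup A_2\cup A_3$ with $\card{A_i}=n_i$ so that every pair having its two endpoints in different parts has a convexly independent midpoint; a short arithmetic check shows that the number of such pairs is exactly $\lfloor\frac13 n^2\rfloor$ for the balanced split. The core of the construction is the triple of ``parabola'' sets
\[
 G_1=\{(a,0,a^2)\},\qquad G_2=\{(0,b,b^2)\},\qquad G_3=\{(c,c,c^2)\},
\]
where $a$, $b$, $c$ run over finite sets of the prescribed sizes. For $i\ne j$ the Minkowski sum $G_i+G_j$ projects bijectively to the $xy$-plane, and there its height is a \emph{positive definite} quadratic form in the projected coordinates --- this works precisely because the three ``base directions'' $(1,0)$, $(0,1)$, $(1,1)$ are pairwise linearly independent. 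Hence each $G_i+G_j$ lies on a strictly convex paraboloid, so each is convexly independent in itself. The remaining task is to combine the three families: after scaling all $G_i$ by a common small factor and translating $G_i$ by a suitable vector, the three ``midpoint caps'' $\tfrac12(G_i+G_j)$ become tiny, far apart, and face pairwise independent outward directions, so their union lies on the boundary of its own convex hull and is therefore convexly independent; taking $A_i$ to be the translated and scaled copy of $G_i$ finishes the proof. I expect this gluing to be the only genuine obstacle: naively placing the three groups near the vertices of a triangle fails, because the cone of linear functionals that ``selects'' one midpoint cap among the three need not meet the normal cone of a given vertex of that cap, so one really has to arrange a single strictly convex surface supporting all three caps at once.

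\medskip
\noindent\emph{Upper bound.}
Fix $P$ with $\card P=n$ (which one may take in general position, since a small perturbation does not decrease $E(P)$) and a family $\mathcal S$ of pairs whose midpoints $m_{pq}=\tfrac12(p+q)$ are convexly independent; we must show $\card{\mathcal S}\le\frac38 n^2+O(n^{3/2})$. The geometric input is an analysis of five points: if $p_1,\dots,p_5$ have all $\binom52=10$ midpoints convexly independent, then one of them lies in the interior of the tetrahedron spanned by the other four; equivalently, the unique affine dependence $\sum\mu_i p_i=0$, $\sum\mu_i=0$, has a $1$--$4$ split of signs rather than a $2$--$3$ split (so for points of $P$ in convex position this never happens). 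One clean way to see this is that the ten midpoints are the image of the ten vertices of the second hypersimplex $\Delta_{2,5}\subset\R^4$ under the affine projection $e_i\mapsto p_i$, and a direct computation of the tangent cones of $\Delta_{2,5}$ shows that all ten vertices survive the projection to $\R^3$ exactly when the kernel direction $\mu$ has a single coordinate of a given sign. From here one must turn this forbidden local behaviour into a Tur\'an-type bound on the graph $G=([n],\mathcal S)$, with the $O(n^{3/2})$ error term absorbing degenerate configurations --- coinciding midpoints $p+q=p'+q'$ and other non-generic incidences --- via a K\H ov\'ari--S\'os--Tur\'an-type estimate. The hard part is precisely this last step: extracting the sharp constant $\frac38$ from the five-point information (in particular handling interior points of $P$, for which $m_{p_0q}=\sum\lambda_i m_{p_iq}$ when $p_0=\sum\lambda_i p_i$ severely restricts the good neighbours of $p_0$) and bounding the degenerate contribution.
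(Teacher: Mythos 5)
Both halves of your proposal stop short of the step you yourself identify as the real difficulty, and in each case the missing step is the actual content of the paper's proof.

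\emph{Lower bound.} Your three parabolic arcs have tangent directions $(1,0)$, $(0,1)$, $(1,1)$ at the base point, and these are \emph{coplanar}. The outward normal of the cap $\frac12(G_i+G_j)$ at $\frac12(p+q)$ is orthogonal to the tangent lines of $G_i$ at $p$ and of $G_j$ at $q$, so with your choice all three caps face (essentially) the same direction $-e_3$ -- as your own computation of the three positive definite forms confirms, each cap sits at the bottom of an upward paraboloid. Translations and a common scaling do not change normal cones, so they cannot make the caps ``face pairwise independent outward directions''; in fact two small caps with parallel downward normals can never be jointly convexly independent under any relative translation (the tangent plane at an interior point of one cap, tilted toward the other, passes above the other cap; symmetrizing gives contradictory height requirements). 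So the gluing you defer is not a technicality: it fails for your $G_i$. The fix is to choose the three base tangent directions linearly independent, which is what the paper does: the $B_i$ are small circular arcs around the midpoints of three edges of a cube meeting at a vertex, curved toward the centre $o$, so the three caps $\frac12(B_i+B_j)$ face the three independent face-normals of the cube; the proof then consists of the quantitative estimate that every supporting plane of a cap is a small perturbation of the corresponding face-plane $\Pi_k$, while the other two caps sit at distance almost $\frac12$ from $\Pi_k$ on the correct side.

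\emph{Upper bound.} The five-point fact you state (if all $10$ midpoints of $p_1,\dots,p_5$ are convexly independent then the five points are not convexly independent, i.e.\ one lies in the tetrahedron of the other four) is exactly the Halman--Onn--Rothblum lemma the paper quotes, but on its own it forbids \emph{nothing}: a $K_5$ in the midpoint graph is perfectly consistent with it, so no Tur\'an-type bound follows, let alone the constant $\frac38$. The missing idea is to forbid the $5$-chromatic graph $K_{2,2,2,2,2}$: given five doubled classes $C_1,\dots,C_5$ with $\bigcup_{i<j}\frac12(C_i+C_j)$ convexly independent, one first notes that the union of any four classes is convexly independent, then picks a transversal $c_1,\dots,c_4$ of maximal tetrahedron volume, deduces from the five-point lemma that \emph{both} points of $C_5$ lie inside that tetrahedron, and lets the ray through them exit a face $c_1c_2c_3$ to contradict the convex independence of $C_1\cup C_2\cup C_3\cup C_5$. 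The Erd\H{o}s--Simonovits refinement of Erd\H{o}s--Stone for $K_{2,2,2,2,2}$ then gives $\bigl(1-\frac14\bigr)\frac{n^2}{2}+O(n^{3/2})=\frac38 n^2+O(n^{3/2})$ directly; no general-position perturbation or K\H{o}v\'ari--S\'os--Tur\'an accounting of degenerate incidences is needed. As written, your sketch leaves precisely this extraction of a forbidden subgraph -- the heart of the argument -- unproved.
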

\begin{proof}
For the lower bound it is sufficient to construct, for each natural number $k$, three collections $B_1,B_2,B_3$ of $k$ points each in $\R^3$ such that $\frac12(B_1+B_2)\cup\frac12(B_2+B_3)\cup\frac12(B_3+B_1)$ is convexly independent.
In fact we will construct three infinite collections with this property.

Consider a cube with side length $2$ and center $o$.
Let $I_1$, $I_2$, $I_3$ be three of its edges with a common vertex.
If, for each $i=1,2,3$, we let $A_i$ be a small subinterval of $I_i$ such that $A_i$ and $I_i$ have the same midpoint, then for each triple $i,j,k$ with $\{i,j,k\}=\{1,2,3\}$, $\frac12(A_i+A_j)$ is a small rectangle in the plane $\Pi_k$ through $I_i$ and $I_j$.
Then the set $\bigcup_{i<j}\frac12(A_i+A_j)$ is in convex position, in the sense that each of its points is on the boundary of its convex hull.
It is not convexly independent, however.
Note that $\frac12(A_i+A_k)$ and $\frac12(A_j+A_k)$ are both a distance of almost $1/2$ from $\Pi_k$ and are in the same open half space as $o$.

Now we replace each $A_i$ by a sufficiently small strictly convex curve $B_i$, arbitrarily close to $A_i$, in the plane $\Sigma_i$ through $o$ and $I_i$, curved in such a way that $B_i\cup\{o\}$ is in strictly convex position.
For example, we may take $B_i$ to be a small arc of a circle with center $o$ and radius $\sqrt{2}$, around the midpoint of $I_i$.

At each point $p$ of $B_i$ there is a line $\ell_p$ supporting $B_i$ at $p$ in the plane $\Sigma_i$.
For each plane $\Pi$ through $\ell_p$ except $\Sigma_i$, $B_i\setminus\{p\}$ and $o$ lie in the same open half space bounded by $\Pi$.
Note that $\ell_p$ is almost parallel to $I_i$, because $B_i$ is close to $A_i$.

Now let $\{i,j,k\}=\{1,2,3\}$ and consider points $p\in B_i$, $q\in B_j$, and let $\ell_p$ and $\ell_q$ be as above.
Let $\Sigma$ be the plane through $o$ containing lines parallel to $\ell_p$ and $\ell_q$.
Then by the previous paragraph, $p+\Sigma$ is a plane supporting $B_i$ at $p$ such that $B_i\setminus\{p\}$ lies in the same open half space as $o$, with a similar statement for $q+\Sigma$.
It follows that $\frac12(p+q)+\Sigma$ is a plane supporting $\frac12(B_i+B_j)$ at $\frac12(p+q)$ such that $\frac12(B_i+B_j)\setminus\{\frac12(p+q)\}$ lies in the same open half space as $o$.
Since $\ell_p$ is almost parallel to $I_i$ and $\ell_q$ almost parallel to $I_j$, $\Sigma$ is almost parallel to $\Pi_k$ (the plane through $I_i\cup I_j$).
Thus $\frac12(p+q)+\Sigma$ is a small perturbation of $\Pi_k$.
Since $\frac12(B_i+B_k)$ and $\frac12(B_j+B_k)$ are at a distance of almost $1/2$ from $\Pi_k$, they will also be in the same open half space  determined by $\frac12(p+q)+\Sigma$ as $o$.
It follows that $\bigcup_{i<j}\frac12(B_i+B_j)\setminus\{\frac12(p+q)\}$ is in an open half space bounded by $\frac12(p+q)+\Sigma$.

It follows that $\bigcup_{i<j}\frac12(B_i+B_j)$ is in strictly convex position.
We may now choose $k$ points from each $B_i$ to find a set of $3k$ points in $\R^3$ with the midpoints of $3k^2$ pairs of points in strictly convex position.

For the upper bound it follows from refinements of the Erd\H{o}s-Stone theorem (see e.g.\ \cite{ES}) that it is sufficient to show that any geometric graph such that the midpoints of the edges are convexly independent, does not contain $K_{2,2,2,2,2}$, the complete $5$-partite graph with two vertices in each class.

Thus assume for the sake of contradiction that there exist five sets $C_i$, $i=1,2,3,4,5$, of two points each in $\R^3$, such that $\bigcup_{i<j}\frac12(C_i+C_j)$ is convexly independent.
In particular, if we choose a $c_i\in C_i$ for each $i$, we obtain that the $10$ midpoints of $\{c_1,\dots,c_5\}$ are convexly independent.
As proved by Halman et al.\ \cite{Halman}, the set $\{c_1,\dots,c_5\}$ cannot then itself be convexly independent.
On the other hand, the union of any $4$ of the $C_i$s must be convexly independent.
Indeed, for any fixed $c_1\in C_1$, since $\frac12(c_1+\bigcup_{j=2}^5 C_j)$ must be convexly independent, the union $\bigcup_{j=2}^5 C_j$ is also convexly independent.
Now choose $4$ points from different $C_i$s such that their convex hull has largest volume among all such choices.
Without loss of generality, we may assume that these points are $c_i\in C_i$, $i=1,2,3,4$.
For any $c_5\in C_5$, as mentioned above, the set $\{c_1,\dots,c_5\}$ is not convexly independent, i.e., one of the points is in the convex hull of the others.
If e.g.\ $c_1$ is in the convex hull of $c_2c_3c_4c_5$, then $c_2c_3c_4c_5$ has larger volume, a contradiction.
Similarly, none of $c_2$, $c_3$, $c_4$ can be in the convex hull of the other four.
Thus $c_5$ must be in the convex hull of $c_1c_2c_3c_4$.
Similarly, the other point $c_5'\in C_5$ is also in the tetrahedron $c_1c_2c_3c_4$.
The ray from $c_5$ through $c_5'$ intersects one of the faces of this tetrahedron, say the triangle $c_1c_2c_3$.
Then $\{c_1,c_2,c_3,c_5,c_5'\}$ is not convexly independent.
It follows that $C_1\cup C_2\cup C_3\cup C_5$ is not convexly independent, which contradicts what we have already shown.
\end{proof}

Note that by the Erd\H{o}s-Stone theorem, one of the two bounds in Theorem~\ref{thmE} must be asymptotically correct.
Indeed, either there is some upper bound to $c\in\N$ for which the complete $4$-partite graph $K_{c,c,c,c}$ is realizable, from which the Erd\H{o}s-Stone theorem gives $E(n)\leq n^2/3 +o(n^2)$, or there is no such upper bound, which trivially gives the lower bound $3n^2/8$.
We conjecture that $K_{c,c,c,c}$ is not realizable for some $c\in\N$.
It would be sufficient to prove the following.

\begin{conjecture}
For some $\epsi>0$ the following holds.
Let $A_i=\{p_i,q_i\}$, $i=1,2,3,4$, be four sets of two points each in $\R^3$, such that $\norm{p_i-q_i}_2<\epsi$.
Then the set of midpoints between different $A_i$,
\[\bigcup_{\substack{i,j=1,2,3,4,\\ i\neq j}}\frac12(A_i+A_j),\] is not convexly independent.
\end{conjecture}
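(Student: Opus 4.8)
We outline a possible line of attack. The idea is to use the smallness of $\epsi$ to reduce the conjecture to a statement about the four ``half--difference'' vectors $\delta_i:=\frac12(p_i-q_i)$, and then to settle that statement. Put $a_i:=\frac12(p_i+q_i)$ and translate so that $a_1+a_2+a_3+a_4=0$. Then, up to translation, each set $\frac12(A_i+A_j)$ is the parallelogram with vertices $m_{ij}+\{\pm\delta_i\pm\delta_j\}$ centred at the midpoint $m_{ij}=\frac12(a_i+a_j)$, and $m_{ij}=-m_{kl}$ whenever $\{i,j,k,l\}=\{1,2,3,4\}$, so the $m_{ij}$ are the six vertices $\pm u,\pm v,\pm w$ (with $u=m_{12}$, $v=m_{13}$, $w=m_{14}$) of a centrally symmetric octahedron $O$. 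Choosing one endpoint in each $A_i$, the result of Halman et al.\ used in the proof of Theorem~\ref{thmE} shows that the $a_i$ are not convexly independent; since convex independence of the $24$ midpoints is an open condition, preserved when the $a_i$ are perturbed without changing the $\delta_i$, we may assume $a_1,\dots,a_4$ affinely independent, i.e.\ $O$ is a genuine octahedron.

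Step one is an \emph{infinitesimal reduction}: if $\epsi$ is small relative to the shape of $O$, the $24$ midpoints are convexly independent only if, for every vertex $m_{ij}$ of $O$, the plane $\operatorname{span}(\delta_i,\delta_j)$ meets the reflected tangent cone $-T_{m_{ij}}(O)$ only at the origin. Indeed, a vertex of the little parallelogram at $m_{ij}$ is extreme in the whole $24$-point set precisely when some outer normal of $O$ at $m_{ij}$ separates it from the other three vertices of that parallelogram — the remaining five clusters being irrelevant, as they lie a fixed distance inside the corresponding supporting halfspace — and ``all four vertices extreme'' unwinds to $\operatorname{span}(\delta_i,\delta_j)\cap(-T_{m_{ij}}(O))=\{0\}$. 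Producing a single $\epsi$ valid for all $O$ needs a compactness argument; this, together with the treatment of very ``flat'' octahedra, is a source of technical work that I have not resolved.

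Step two is a normalisation. The cone conditions transform correctly under $g\in GL_3$ applied simultaneously to $O$ and to all $\delta_i$, so one may take $O=\operatorname{conv}\{\pm e_1,\pm e_2,\pm e_3\}$, whence $-T_{\pm e_k}(O)=\{x:\pm x_k\ge|x_\ell|+|x_m|\}$ for $\{k,\ell,m\}=\{1,2,3\}$. A direct check (dualising this cone over a square) shows that, for $m_{ij}=\pm e_k$, the plane $\operatorname{span}(\delta_i,\delta_j)$ misses $-T_{m_{ij}}(O)$ exactly when the cross product $\delta_i\times\delta_j$ has its $k$-th coordinate strictly largest in absolute value. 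Since $m_{12}=-m_{34}$, $m_{13}=-m_{24}$, $m_{14}=-m_{23}$, the six conditions say: for each edge $ij$ of $K_4$, the coordinate of $\delta_i\times\delta_j$ indexed by $c(ij)$ dominates, where the colouring $c$ gives colour $k$ to the $k$-th perfect matching of $K_4$. Thus the conjecture reduces to the purely algebraic claim: \textbf{there are no $\delta_1,\delta_2,\delta_3,\delta_4\in\R^3$ for which $\delta_i\times\delta_j$ has coordinate $c(ij)$ dominant for all $i<j$.}

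This last claim is where I expect the real difficulty to lie. Passing to Plücker coordinates helps organise it: with $D=[\delta_1\,|\,\delta_2\,|\,\delta_3\,|\,\delta_4]$ and $X,Y,Z$ the $2\times4$ submatrices on rows $\{1,2\},\{2,3\},\{3,1\}$ of $D$, the coordinates of $\delta_i\times\delta_j$ are the column-$\{i,j\}$ minors $Y_{ij},Z_{ij},X_{ij}$, which obey the Grassmann--Plücker relations $X_{12}X_{34}-X_{13}X_{24}+X_{14}X_{23}=0$ (and likewise for $Y$ and $Z$) and the further relations coming from the common matrix $D$ — equivalently, $\operatorname{span}(X,Y,Z)$ is totally isotropic for the natural pairing on $\wedge^2\R^4$, equivalently the Jacobi-type identities $(\delta_i\times\delta_j)\times(\delta_i\times\delta_k)\parallel\delta_i$ and $(\delta_i\times\delta_j)\times(\delta_k\times\delta_l)=[\delta_i\delta_j\delta_l]\delta_k-[\delta_i\delta_j\delta_k]\delta_l$ hold. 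The obstacle is that these extra relations are essential: from the three separate Plücker relations together with \emph{all} the dominance inequalities one deduces only that $|Y_{12}Y_{34}|$, $|Z_{13}Z_{24}|$, $|X_{14}X_{23}|$ satisfy the strict triangle inequality, which is consistent. Hence any proof must genuinely use that the six vectors $\delta_i\times\delta_j$ are pairwise cross products of only four vectors. Carrying out this step — and separately nailing down the uniformity of $\epsi$ in step one — is the content that remains, and presumably the reason the statement is still only a conjecture.
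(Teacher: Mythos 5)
This statement is an open conjecture in the paper --- the authors offer no proof of it --- so there is nothing to compare your argument against, and you yourself flag two unresolved steps. But there is a structural problem with your plan that goes beyond the gaps you acknowledge. The conjecture is invariant under rescaling the whole configuration: if it holds for some $\epsi_0>0$, then applying a homothety to an arbitrary configuration (which preserves convex independence of the midpoint set) shows it holds with no upper bound on $\norm{p_i-q_i}_2$ at all. Consequently one may \emph{never} assume that $\epsi$ is small relative to the octahedron $O=\conv{\{\pm m_{12},\pm m_{13},\pm m_{14}\}}$ of pair-centres: the regime in which the centres $a_i$ are at mutual distance comparable to, or smaller than, $\epsi$ is not a ``flat octahedron'' corner case to be disposed of by compactness --- by the same rescaling it carries the full strength of the conjecture, and your Step one does not touch it. So even if the final algebraic claim about the vectors $\delta_i\times\delta_j$ were proved, you would have handled only the sub-case where the four pairs are well separated; no choice of $\epsi$ makes that sub-case exhaustive.

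Within that sub-case your local analysis is essentially sound and is a genuinely useful reformulation: the identification of the six cluster centres with the vertices of a centrally symmetric octahedron, the normal-cone criterion for all four vertices of the little parallelogram at $m_{ij}$ to be extreme, the $GL_3$-equivariance allowing normalisation to $\conv{\{\pm e_1,\pm e_2,\pm e_3\}}$, and the computation that the condition at $\pm e_k$ is exactly that the $k$-th coordinate of $\delta_i\times\delta_j$ is strictly dominant in absolute value, all check out (modulo degenerate $\delta_i$). The resulting question --- whether four vectors in $\R^3$ can have their six pairwise cross products dominant in the coordinates prescribed by the proper $3$-edge-colouring of $K_4$ --- is clean, and your observation that the three Pl\"ucker relations alone do not refute it is a fair indication of where the difficulty sits; but it remains unproved. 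One small correction: the result of Halman et al.\ invoked in the proof of Theorem~\ref{thmE} concerns \emph{five} points and does not show that the four centres $a_i$ fail to be convexly independent (four points in general position in $\R^3$ are convexly independent); only your perturbation remark, that convex independence of the $24$ midpoints is an open condition so the $a_i$ may be taken affinely independent, is needed there, and that part is fine.
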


\subsection{Pairwise nonparallel unit distance pairs in strictly convex norms}

The function $W_d(n)$ is related to large strictly antipodal families, as studied by Martini and Makai \cite{MM, MM2} and others \cite{CKSW}.
We introduce the following related quantities.

Let $U_d(n)$ be the largest number of unit distance pairs that can occur in a set of $n$ points in a strictly convex $d$-dimensional normed space.
Let $D_d(n)$ be the largest number of diameter pairs that can occur in a set of $n$ points in a strictly convex $d$-dimensional normed space, where a diameter pair is a pair of points from the set whose distance equals the diameter of the set (in the norm).
As in the definition of $W_d(n)$, for both $U_d(n)$ and $D_d(n)$ we take the maximum over all sets of $n$ points in $\R^d$ and all strictly convex norms on $\R^d$.
Then clearly $W_d(n)\leq U_d(n)$ and $D_d(n)\leq U_d(n)$.
Our final result is the observation that these three functions are in fact asymptotically equal for each $d\geq 3$.
To this end we use the notion of a strictly antipodal family of sets.
Let $\{A_i\colon i\in I\}$ be a family of sets of points in $\R^d$.
We say that this family is \emph{strictly antipodal} if for any $i,j\in I$, $i\neq j$, and any $p\in A_i$,  $q\in A_j$, there is a linear functional $\fhi:\R^d\to \R$ such that $\fhi(p)< \fhi(r)< \fhi(q)$ for any $r\in \bigcup_{i\in I}A_i\setminus\{p,q\}$.
Let $a(d)$ denote the largest $k$ such that for each $m$ there exists a strictly antipodal family of $k$ sets in $\R^d$, each of size at least $m$.
It is known that $c^d<a(d)<2^d$ for some $c>1$, and $3\leq a(3)\leq 5$ \cite{MM}.

\begin{theorem}
\[\lim_{n\to\infty}\frac{W_d(n)}{n^2}=\lim_{n\to\infty}\frac{U_d(n)}{n^2}=\lim_{n\to\infty}\frac{D_d(n)}{n^2}=\frac12\left(1-\frac{1}{a(d)}\right).\]
\end{theorem}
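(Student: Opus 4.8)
The plan is to prove
\[ Ln^2-O(n)\le W_d(n),\qquad Ln^2-O(n)\le D_d(n),\qquad U_d(n)\le Ln^2+o(n^2), \]
where $L:=\tfrac12(1-1/a(d))$, and to combine these with the already-noted inequalities $W_d(n)\le U_d(n)$ and $D_d(n)\le U_d(n)$. Indeed, $W_d\le U_d\le Ln^2+o(n^2)$ together with $W_d\ge Ln^2-O(n)$ forces $W_d(n)/n^2\to L$, and likewise $D_d(n)/n^2\to L$; and then $Ln^2-O(n)\le D_d(n)\le U_d(n)\le Ln^2+o(n^2)$ forces $U_d(n)/n^2\to L$ as well. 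So there are two tasks: an upper bound for $U_d$ via extremal graph theory, and a matching construction for $D_d$ and $W_d$.

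For the upper bound I would prove the following lemma: there is $m_0=m_0(d)$ such that the unit distance graph of any finite point set in any strictly convex $d$-dimensional normed space contains no copy of $K_{a(d)+1}(m_0,\dots,m_0)$, the complete $(a(d)+1)$-partite graph with $m_0$ vertices per class. Granting the lemma, the Erd\H{o}s--Stone theorem applied with $H=K_{a(d)+1}(m_0,\dots,m_0)$ (so $\chi(H)-1=a(d)$) gives $U_d(n)\le(1-1/a(d))\binom n2+o(n^2)=Ln^2+o(n^2)$. To prove the lemma, suppose that for arbitrarily large $m$ some strictly convex norm on $\R^d$ admits sets $A_1,\dots,A_{a(d)+1}$, each of size $m$, with $\norm{p-q}=1$ whenever $p,q$ lie in different $A_i$. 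For a cross pair $p\in A_i$, $q\in A_j$, strict convexity provides a unique linear functional $\psi_{p,q}$ with $\psi_{p,q}(q-p)=1=\norm{q-p}$, and the equations $\norm{r-p}=\norm{q-r}=1$ for $r$ in a third set $A_\ell$ then give $\psi_{p,q}(p)<\psi_{p,q}(r)<\psi_{p,q}(q)$, the strictness coming from strict convexity of the unit ball. The obstruction is that $\psi_{p,q}$ need not separate $p$ or $q$ from the other points of $A_i$, respectively $A_j$. One repairs this by an Erd\H{o}s--Szekeres/Dilworth argument whose bounds depend only on $m$ and $d$: pass to subsets $A_i'\subseteq A_i$ whose sizes still tend to infinity with $m$ and on which all the linear orders induced by these functionals are mutually consistent, so that the strict separation holds for every point of $\bigcup_\ell A_\ell'$. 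The resulting family is strictly antipodal and has $a(d)+1$ classes of unbounded size, contradicting the maximality in the definition of $a(d)$.

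For the construction it is enough to realize, for every $m$, the complete $a(d)$-partite graph $K_{a(d)}(m,\dots,m)$ simultaneously as the diameter graph and as a set of pairwise nonparallel unit segments of a single strictly convex norm; this gives $D_d(a(d)m)\ge\binom{a(d)}2m^2=L(a(d)m)^2$ and the same for $W_d$, and an arbitrary $n$ is handled by taking $m=\lfloor n/a(d)\rfloor$ at a cost of $O(n)$. Begin with a strictly antipodal family $A_1,\dots,A_{a(d)}$ with $\card{A_i}=m$, which exists by the definition of $a(d)$. Strict antipodality is an open condition and is not destroyed by contracting a class towards one of its own points, so I would shrink each $A_i$ towards a chosen $b_i\in A_i$ until all within-class differences $p-p'$ are tiny, while keeping the family strictly antipodal; the limiting singleton family $\{b_1\},\dots,\{b_{a(d)}\}$ is then strictly antipodal, so after a small generic perturbation the $2\binom{a(d)}2$ vectors $\pm(b_i-b_j)$ are distinct, pairwise nonparallel, and in strictly convex position (the functional separating $b_i$ from $b_j$ exhibits $b_i-b_j$ as the unique minimizer over all such differences). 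Choosing the points inside each small cluster suitably, one keeps all cross differences $p-q$ ($p\in A_i$, $q\in A_j$, $i\ne j$) in strictly convex position and pairwise nonparallel, while every within-class difference is tiny and hence lies in the interior of the convex hull of the cross differences. Smoothing that convex hull produces an origin-symmetric strictly convex body $C$ whose boundary contains exactly the cross differences; in the norm with unit ball $C$ the set $\bigcup_iA_i$ has diameter $1$, its diameter pairs are precisely the $\binom{a(d)}2m^2$ cross pairs, and these are pairwise nonparallel unit segments.

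I expect the main obstacle to be the lemma behind the upper bound, specifically the step that converts a large complete multipartite unit distance configuration into an honest strictly antipodal family: a single supporting functional controls only the ``third-party'' classes, so one must juggle many functionals and extract, by a Ramsey-type argument whose bounds do not depend on the norm, large subsets on which all the induced orders agree. A secondary point to check in the construction is that the clusters can be chosen so that all $m^2$ cross differences between a fixed pair of clusters --- not merely the $2\binom{a(d)}2$ cluster centres --- are in convex position, which is where an input like the identity $M_d(m,m)=m^2$ for $d\ge3$ enters; and one should observe that lower-dimensional optimal families only decrease $a(d)$ and cause no difficulty, since a strictly convex body in $\R^d$ whose boundary passes through a prescribed finite planar set still exists.
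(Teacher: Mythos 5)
Your skeleton coincides with the paper's: an Erd\H{o}s--Stone upper bound for $U_d$ obtained by forbidding the complete $(a(d)+1)$-partite unit-distance graph with large classes, a matching lower bound for $W_d$ and $D_d$ from a strictly antipodal family of $a(d)$ large classes with a centrally symmetric strictly convex surface through the cross differences, and the sandwich $W_d,D_d\le U_d$. But the step you yourself single out as the main obstacle is a genuine gap, and the repair you propose is not the right tool. The paper closes it with a covering lemma: by Rogers--Zong, $\conv{A_i}$ is covered by at most $(1+\lambda)^dO(d\log d)$ translates of $-\lambda\conv{A_i}$, so each $A_i$ (of diameter at most $2$ by the triangle inequality) contains a subset $A_i'$ with $\card{A_i'}\ge c_d m$ and diameter less than $1$. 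After this reduction every cross pair realizes the diameter of $\bigcup_iA_i'$, and for a \emph{diameter} pair $(p,q)$ in a strictly convex norm the supporting functional $\psi$ with $\psi(q-p)=\norm{q-p}=1$ and $\psi\le\norm{\cdot}$ automatically sandwiches every other point $r$ strictly, \emph{including the points of $p$'s and $q$'s own classes}: $\psi(q)-\psi(r)=\psi(q-r)\le\norm{q-r}\le1$ with equality forcing $q-r=q-p$ by strict convexity, and symmetrically for $\psi(r)-\psi(p)$. Your proposed Erd\H{o}s--Szekeres/Dilworth extraction of subsets ``on which the induced orders agree'' does not address the actual requirement, which is that for every $p$ the functional $\psi_{p,q}$ (which changes with $p$) make $p$ the strict minimum of its \emph{own} class; mutual consistency of the orders induced by the various $\psi_{p,q}$ neither implies this nor comes with norm-independent bounds in any evident way. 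Without the small-diameter reduction (or some substitute), the ``honest strictly antipodal family'' is not produced and the upper bound does not follow.

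The lower bound also takes a detour that is both unnecessary and unsound. Contracting each class toward a point $b_i$ is not guaranteed to preserve strict antipodality: the functional $\fhi_{p,q}$ witnessing the pair $(p,q)$ controls the relative order of $p$ and the points of third classes, but not the relative order of the centers $b_i$ and $b_\ell$, so after contraction the required inequalities for third-party classes can fail; and a family of tiny clusters around strictly antipodal singletons need not be strictly antipodal at all (a point interior to its cluster's hull in the relevant direction cannot be made a strict minimum), which is exactly the phenomenon that makes $a(d)<2^d$. Likewise $M_d(m,m)=m^2$ asserts the existence of \emph{some} pair of sets with convexly independent Minkowski sum and cannot be imposed on clusters already constrained to sit near prescribed positions. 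None of this machinery is needed: the definition of strict antipodality quantifies over \emph{all} points, so for each cross pair $(p,q)$ the functional $\fhi_{p,q}$ exhibits $q-p$ as the unique maximizer of $\fhi_{p,q}$ over all cross differences, whence $\bigcup_{i\ne j}(A_i-A_j)$ is already centrally symmetric and convexly independent; one then perturbs to destroy parallelisms and passes a centrally symmetric strictly convex surface through this set to define the norm, exactly as in the paper.
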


\begin{proof}
Suppose first $\{A_i\colon i=1,\dots,a(d)\}$ is a strictly antipodal family of sets in $\R^d$, each of size $k$, where $k\in\N$ is arbitrary.
We may perturb these points such that the family remains strictly antipodal, so that no two segments between pairs of points from $\bigcup_i A_i$ are parallel.
It follows from the definition of strict antipodality that $\bigcup_{i,j,i\neq j}(A_i-A_j)$ is a centrally symmetric, convexly independent set of points.
There exists a centrally symmetric, strictly convex surface $S$ through these points.
The set $S$ defines a strictly convex norm on $\R^d$ such that the distance between any two points in different $A_i$ is a unit distance.
Note that all distances between points in $\bigcup_i A_i$ are at most $1$.
This gives two lower bounds 
\[W_d(n),D_d(n)\geq \frac12\left(1-\frac{1}{a(d)}\right)(1+o(1))n^2.\]

We have already mentioned the trivial inequalities $W_d(n),D_d(n)\leq U_d(n)$.
It remains to show that
\[U_d(n)\leq \frac12\left(1-\frac{1}{a(d)}\right)(1+o(1))n^2.\]
Suppose this is false.
Then, by the Erd\H{o}s-Stone theorem, for arbitrarily large $m\in\N$ there exists a family $\{A_i\colon i=1,\dots,a(d)+1\}$ with each $A_i$ a set of $m$ points in $\R^d$, and a strictly convex norm on $\R^d$, such that the distance between any two points from different $A_i$ is $1$ in this norm.
By the triangle inequality, the diameter of each $A_i$ is at most $2$.
By Lemma~\ref{covering} below, each $A_i$ has a subset $A_i'$ of at least $c_d m$ points and of diameter less than $1$, for some $c_d>0$ depending only on $d$.
Thus the distance between two points in different $A_i'$ is the diameter of the set $\bigcup_i A_i'$.
It follows, again from the definition of strict antipodality, that $\{A_i'\colon i=1,\dots,a(d)+1\}$ is a strictly antipodal family of more than $a(d)$ sets.
Since the size of each $A_i'$ is arbitrarily large, we obtain a contradiction.
\end{proof}

\begin{lemma}\label{covering}
Let $A$ be a set of $m$ points of diameter $1$ in a $d$-dimensional normed space.
Then for any $\lambda\in(0,1)$, $A$ has a subset $A'$ of diameter at most $\lambda$ and with \[\card{A'}\geq \frac{\card{A}}{(1+\lambda)^{d+O(\log d)}}.\]
\end{lemma}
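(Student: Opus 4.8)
The plan is to locate a single ball of radius $\lambda/2$ in the given norm that contains a fixed proportion of the points of $A$; any subset of such a ball automatically has diameter at most $\lambda$, so this subset will serve as $A'$. I would find the ball by averaging its centre over a suitable region, which reduces the whole statement to one isodiametric-type volume estimate.

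Write $B$ for the unit ball of the norm and set $K=\conv A$; the hypothesis $\operatorname{diam}A=1$ gives $K-K\subseteq B$. Choose the centre $x$ uniformly at random in the convex body $K\oplus\tfrac{\lambda}{2}B$. For a fixed $a\in A$ the event $a\in x+\tfrac{\lambda}{2}B$ is the same as $x\in a+\tfrac{\lambda}{2}B$, and since $a+\tfrac{\lambda}{2}B\subseteq K\oplus\tfrac{\lambda}{2}B$ this event has probability $\vol{\tfrac{\lambda}{2}B}/\vol{K\oplus\tfrac{\lambda}{2}B}$. By linearity of expectation some choice of $x$ gives a set $A':=A\cap(x+\tfrac{\lambda}{2}B)$ with $\card{A'}\ge\card A\cdot\vol{\tfrac{\lambda}{2}B}/\vol{K\oplus\tfrac{\lambda}{2}B}$, and $\operatorname{diam}A'\le\operatorname{diam}(\tfrac{\lambda}{2}B)=\lambda$. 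Hence it suffices to prove
\[
  \frac{\vol{K\oplus\tfrac{\lambda}{2}B}}{\vol{\tfrac{\lambda}{2}B}}
  \le\left(\frac{1+\lambda}{\lambda}\right)^{d+O(\log d)}.
\]
(Alternatively one may cover $K$ by translates of $\tfrac{\lambda}{2}B$ using Rogers' covering theorem and then pigeonhole; that route is where the factor $O(\log d)$ in the exponent naturally comes from, whereas the averaging argument above already gives the cleaner exponent $d$.)

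I would deduce the displayed inequality from the sharp estimate $\vol{K\oplus\tfrac{\lambda}{2}B}\le\vol{\tfrac{1+\lambda}{2}B}$, i.e.\ that among convex bodies of diameter $1$ the ball $\tfrac12 B$ maximises the volume of the $\tfrac{\lambda}{2}$-neighbourhood. Expand $\vol{K\oplus\tfrac{\lambda}{2}B}=\sum_{i=0}^{d}\binom{d}{i}b_i(\tfrac{\lambda}{2})^{d-i}$, where $b_i=V(K[i],B[d-i])$ are the mixed volumes, so $b_0=\vol B$ and $b_d=\vol K$; comparing termwise with $\vol{\tfrac{1+\lambda}{2}B}=\vol B\,(\tfrac12+\tfrac{\lambda}{2})^{d}$ it is enough to show $b_i\le 2^{-i}\vol B$ for each $i$ (this is vacuous once $i$ exceeds the dimension of $K$, where $b_i=0$). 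Since a mixed volume with symmetric arguments depends on $K$ only through its difference body and $\tfrac12(K-K)\subseteq\tfrac12 B$, we get $b_1=V(\tfrac12(K-K)[1],B[d-1])\le V(\tfrac12 B[1],B[d-1])=\tfrac12\vol B$; and by the Aleksandrov--Fenchel inequalities the ratios $b_i/b_{i-1}$ are nonincreasing in $i$, hence each is at most $b_1/b_0\le\tfrac12$, so $b_i=\vol B\prod_{k=1}^{i}(b_k/b_{k-1})\le 2^{-i}\vol B$. (In particular $b_d=\vol K\le 2^{-d}\vol B$, the isodiametric inequality, which also follows directly from Brunn--Minkowski via $2^{d}\vol K\le\vol{K-K}\le\vol B$.)

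The step I expect to be the real work is this chain of mixed-volume estimates, and in particular bringing the base of the bound down to $(1+\lambda)/\lambda=1+1/\lambda$ rather than $1+2/\lambda$: merely enclosing $A$ in a circumscribed ball (of radius $1$, or radius $\tfrac{d}{d+1}$ by Bohnenblust's theorem) and covering that ball loses, for small $\lambda$, a factor close to $2^{d}$ in the bound, which the $O(\log d)$ in the exponent cannot absorb. The probabilistic averaging and the final pigeonhole are routine.
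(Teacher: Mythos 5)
Your proof is correct, but it takes a genuinely different route from the paper's. The paper also reduces the lemma to a covering statement, but it applies the Rogers--Zong covering theorem with the covering body $H=-\lambda\conv{A}$ rather than a ball of the norm: then $\vol{K-H}/\vol{H}=\vol{(1+\lambda)K}/\vol{\lambda K}=\bigl((1+\lambda)/\lambda\bigr)^d$ is computed exactly, each translate of $H$ has diameter $\lambda$, and the pigeonhole principle finishes; the $O(\log d)$ in the exponent is precisely the $O(d\log d)$ overhead of the covering theorem. That choice of $H$ sidesteps what you rightly identify as the hard step of your route (and of any covering by translates of $\frac{\lambda}{2}B$), namely bounding $\vol{K\oplus\frac{\lambda}{2}B}/\vol{\frac{\lambda}{2}B}$. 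Your averaging argument together with the mixed-volume chain ($b_1\le\frac12\vol{B}$ from $K-K\subseteq B$ and monotonicity, then log-concavity of $(b_i)$ via Aleksandrov--Fenchel) is a valid, self-contained proof of that bound and yields the sharper conclusion $\card{A'}\ge\card{A}\,(\lambda/(1+\lambda))^{d}$ with no $\log d$ loss, at the price of invoking Aleksandrov--Fenchel where the paper needs only an off-the-shelf covering estimate. Note that both your bound and the paper's own computation produce a denominator $(1+1/\lambda)^{d+O(\log d)}$ rather than the $(1+\lambda)^{d+O(\log d)}$ printed in the statement (for small $\lambda$ no bound of the printed form can hold, as a fine grid in a cube shows); since the lemma is applied only with a fixed $\lambda<1$ to produce a constant $c_d$, the discrepancy is harmless, and your argument establishes the corrected statement.
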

\begin{proof}
According to a result of Rogers and Zong \cite{RZ}, if $N$ is the smallest number of translates of a convex body $H$ that cover a convex body $K$, then
\[ N\leq \frac{\vol{K-H}}{\vol{H}}(d\log d +d\log\log d+5d). \]
Applying this to $K=\conv{A}$ and $H=-\lambda K$, we obtain that there are at most $(1+\lambda)^d O(d\log d)$ translates of $-\lambda \conv{A}$ (each of diameter $\lambda$) that cover $\conv{A}$.
By the pigeon-hole principle, one of the translates contains at least $\frac{\card{A}}{(1+\lambda)^d O(d\log d)}$ points of $A$.
\end{proof}

\end{document}